\renewcommand{\mathcal}{\mathscr}
\renewcommand{\epsilon}{\varepsilon}
\renewcommand{\phi}{\varphi}
\newcommand{\cal}{\mathcal}
\newcommand{\dimH}{\textrm{dim}_{\mathcal H}}
\newcommand{\dimP}{\textrm{dim}_{\mathcal P}}
\numberwithin{equation}{section}
\theoremstyle{plain}
\newtheorem{theorem}{Theorem}[section]
\newtheorem{lemma}[theorem]{Lemma}
\newtheorem{proposition}[theorem]{Proposition}
\newtheorem{corollary}[theorem]{Corollary}
\theoremstyle{definition}
\newtheorem{remark}[theorem]{Remark}
\newtheorem{example}[theorem]{Example}
\newcommand{\R}{\mathbb R}
\DeclareMathOperator{\diam}{diam}
\title{\textbf{Mass transference principle: from balls to arbitrary shapes}}
\author{Henna Koivusalo}
\address{Henna Koivusalo\\ University of Vienna\\ Oskar Morgensternplatz 1\\ 1090 Vienna, Austria}\email{henna.koivusalo@univie.ac.at}
\author{Micha\l\ Rams}
\address{Micha\l\ Rams\\Institute of Mathematics\\ Polish
Academy of Sciences\\ ul.
\'Sniadeckich 8, 00-656 Warszawa, Poland }\email{rams@impan.pl}
\date{\today}
\begin{document}


\thispagestyle{empty}

\begin{abstract}
The mass transference principle, proved by Beresnevich and Velani in 2006, is a strong result that gives lower bounds for the Hausdorff dimension of limsup sets of balls. We present a version for limsup sets of open sets of arbitrary shape. 
\end{abstract}
\thanks{This project was supported by OeAD grant number PL03/2017. M.R. was supported by National Science Centre grant 2014/13/B/ST1/01033 (Poland).}
\maketitle

\section{Introduction}

For $(A_i)$ a sequence of subsets of $\R^d$, define the limsup set
\[
\limsup A_i =\bigcap _{n=1}^\infty\bigcup_{i\ge n} A_i.
\]
The geometry of limsup sets is of great importance in dimension theory, as large classes of fractal sets, including attractors of iterated function systems and random covering sets, are limsup sets. See \cite{AT} and \cite{FJJS} for discussion and references. Our main focus is on the following fundamental result on dimensions of limsup sets, from a 2006 article of Beresnevitch and Velani \cite{BV}. Let $\lambda$ denote the Lebesgue measure in $\R^d$.
\begin{theorem}[Mass transference principle]
Let $(B_i)$ be a family of balls in $[0,1]^d$ such that $\lambda(\limsup B_i)=1$. Let $a>1$ and for each $i$, let $E_i$ be a ball with the same center as $B_i$ but of diameter $(\diam B_i)^a$. Then
\[
\dimH \limsup E_i \geq \frac da.
\]
\end{theorem}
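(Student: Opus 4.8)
My plan is to deduce the lower bound from the Mass Distribution Principle (Frostman's lemma): it suffices, for each fixed $s<d/a$, to construct a Cantor-type subset $K\subseteq\limsup E_i$ carrying a probability measure $\mu$ with $\mu(B(x,\rho))\le C\rho^s$ for all $x$ and all small $\rho$; then $\dimH\limsup E_i\ge s$, and letting $s\uparrow d/a$ finishes. As a preliminary normalisation I would reduce to the case $\diam B_i\to 0$: if instead a set of positive measure is covered infinitely often by balls of diameter bounded below, one extracts directly a subset of $\limsup E_i$ of full dimension $d\ge d/a$. Under this normalisation the full-measure hypothesis becomes genuinely local: inside any ball $E$ and at any scale, almost every point lies in arbitrarily small balls $B_i\subseteq E$, so a Vitali selection yields finite disjoint subfamilies of $B$-balls, of arbitrarily small diameter and with arbitrarily large indices, covering a fixed proportion $\kappa$ of $E$.

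The construction proceeds by induction on generations. Given a parent ball $E$ of the previous generation, of diameter $R$, I would fix a target scale $\delta=\delta(R,s)$ (pinned down below) and use the local covering above to select disjoint balls $B_{i_j}\subseteq E$ with $\diam B_{i_j}\le\delta$ and $\sum_j\lambda(B_{i_j})\ge\kappa\lambda(E)$. The children of $E$ are the contracted balls $E_{i_j}$, of diameter $(\diam B_{i_j})^a\le\delta^a$; since $a>1$ and all diameters are $<1$ we have $E_{i_j}\subseteq B_{i_j}\subseteq E$, so the children inherit the disjointness, and indeed the ample separation, of their thickenings $B_{i_j}$. I distribute the mass of $E$ among its children proportionally to Lebesgue measure, $\mu(E_{i_j})=\mu(E)\,\lambda(B_{i_j})/\sum_k\lambda(B_{i_k})$. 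Taking $K=\bigcap_n K_n$ with $K_n$ the union of the generation-$n$ balls, each point of $K$ lies in infinitely many distinct $E_i$, so $K\subseteq\limsup E_i$.

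The heart is verifying $\mu(B(x,\rho))\le C\rho^s$ at every scale, and it is here that the exponent $d/a$ is forced. I would check three regimes against the inductive invariant $\mu(E)\asymp R^s$. For $\rho$ comparable to $R$ the invariant is the statement itself. For intermediate $\delta\le\rho\le R$, disjointness of the selected $B$-balls bounds the total Lebesgue measure of those meeting $B(x,\rho)$ by $O(\rho^d)$, whence $\mu(B(x,\rho))\lesssim(\rho/R)^d R^s\le\rho^s$ because $\rho\le R$. The binding regime is the small one, $\rho\approx\delta^a$: here $B(x,\rho)$ can capture a whole (largest) child, so I need $\mu(E_{i_j})\lesssim(\diam E_{i_j})^s=\delta^{as}$, and since $\mu(E_{i_j})\approx\mu(E)\,\delta^d/(\kappa R^d)$ this is the requirement $R^s\lesssim\kappa R^d\delta^{as-d}$. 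Matching the packing exponent $d$ against the contracted-ball target exponent $as$ is precisely the balance that degenerates at $s=d/a$; for any $s<d/a$ one has $as-d<0$, so choosing $\delta$ small enough relative to $R$ makes the requirement hold and closes the induction.

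The main obstacle I anticipate is that the diameters of the given balls $B_i$ are not under our control, so the children at a single generation genuinely have a range of sizes, which complicates both the covering selection and the intermediate multiscale count. The proportional mass assignment is what tames this: a child of smaller $B$-diameter $\delta_j$ receives proportionally less mass $\propto\delta_j^d$, so the small-$\rho$ constraint is governed by the largest child and is controlled once the upper bound $\delta$ is chosen small; and in the intermediate regime the disjoint-packing bound $\sum_{\text{met}}\lambda(B_{i_j})\lesssim\rho^d$ holds regardless of the individual sizes. Carefully splitting the children met by $B(x,\rho)$ according to whether their $B$-diameter exceeds $\rho$, and tracking the gap between the $B$-scale $\delta$ and the contracted $E$-scale $\delta^a$, is the one genuinely technical calculation; once it is in place, letting $s\uparrow d/a$ yields the theorem.
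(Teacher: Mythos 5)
Your proposal is correct and follows essentially the same strategy as the paper's proof of Theorem \ref{thm:aux} (of which this statement is the special case where the $E_i$ are concentric balls): a $5r$-covering selection of disjoint balls capturing a fixed Lebesgue proportion of each parent (the paper's Lemma \ref{lem:kgb}), a nested Cantor set with mass distributed proportionally to $\lambda(B_i)$, and a multiscale local estimate in which disjoint packing handles radii between the child scale and the parent scale while the exponent gap $d-as>0$ absorbs the constants at the scale of a single child. The only real differences are simplifications available in the ball case --- you dispense with the paper's auxiliary measures $\eta_i$ and uniform cube subdivisions, and you trade the paper's $\epsilon_j\to 0$ bookkeeping for a fixed $s<d/a$ followed by $s\uparrow d/a$, which suffices for the dimension (though not for the Hausdorff measure) statement.
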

We note that this is only the part of the original statement which is related to the Hausdorff dimension of the limsup set; the Beresnevitch and Velani's paper also provides information on its Hausdorff measure. The earliest appearance of a dimension statement of this kind is \cite[Theorem 2]{J}. 

The mass transference principle has found a great many applications in calculating the Hausdorff dimension of limsup sets, in metric number theory and fractal geometry. It has also been generalized in several directions. For a recent development see \cite{AB}, where versions of this result with different, more general assumptions on $B_i$ and $\limsup B_i$ are established. 

Of particular interest for us is the generalisation of Wang, Wu, and Xu \cite{WWX}. In their work, under the assumption $\lambda(\limsup B_i)=1$, the authors let the sets $E_i$ to be ellipsoids with semiaxes $(\diam B_i)^{a_j}, 1\leq a_1\leq\ldots\leq a_d$ instead of balls of diameter $(\diam B_i)^a$. They obtain the lower bound 
\begin{equation}\label{eq:WWX}
\min_{1\le j\le d}\left\{ \frac{d+ja_j-\sum_{i=1}^ja_i}{a_j} \right\}
\end{equation}
for the Hausdorff dimension of $\limsup E_i$. In \cite[Section 6]{WWX} they also briefly address the related problem of relaxing the condition on the shapes of the sets $B_i$. We point out that as in \cite{BV}, also in \cite{WWX} a Hausdorff measure statement was proved, but it is the Hausdorff dimension statement that is relevant for our purposes.


In this note we will generalize this result to arbitrary shapes $E_i$: we will only assume that $E_i\subset B_i$ and that they are open and nonempty. We will provide a Hausdorff dimension bound for these sets, and also calculate their packing dimension directly. (The packing dimension claim also follows from the fact that $\limsup E_i$ is a dense $G_\delta$-set, see \cite{SV}.) The argument involves a generalization of what in the dimension theory of iterated function systems is known as Falconer's singular value function, see \cite{F2} and Section \ref{sec:singular value function}. 

The paper is organized as follows. In Section \ref{sec:singular value function} we introduce the generalized singular value function and discuss its properties, in particular its relation to the Hausdorff content. Except for Lemma \ref{lem:phis}, this section is not necessary for the proof of our main result Theorem \ref{thm:main}, but it explains why Corollary \ref{cor:cusps} follows. Our results are formulated in Section \ref{sec:theorems}, where we also present an example to show why the full Lebesgue measure assumption is necessary (in a sense, see \cite{AB}). The results are proved in Sections \ref{sec:construction}-\ref{sec:localdim}. 

\subsection*{Acknowledgements}

We thank the referees for many helpful comments, which helped to significantly improve the article.

\section{Singular value function}\label{sec:singular value function}


In 1988 Kenneth Falconer \cite{F2} introduced a function, the {\it singular value function}, which for an ellipsoid $E\subset\R^d$ with semiaxes $\alpha_1\geq\ldots\geq \alpha_d$ and parameter $s\in [0,d]$ assigns the value
\[
\varphi^s(E)=\alpha_1\alpha_2\ldots\alpha_m\alpha_{m+1}^{s-m},
\]
where $m=\lfloor s \rfloor$ is the largest integer not larger than $s$.  This notion was of crucial importance in calculating in \cite{F2} the dimension of certain self-affine sets. 

Observe that the singular value function is also implicit in the dimension bound of \cite{WWX}. Namely, up to a multiplicative constant depending only on $d$, $\varphi^{s_0}(E_i)=\diam(B_i)^d$, where $s_0$ is given by (1.1). This  is not a coincidence, as the singular value function played important role in \cite{WWX}. Up to a multiplicative constant, the singular value function agrees with the better known quantity of {\it Hausdorff content}
\[
\cal H^s_\infty(A)=\inf\{\sum_{i=1}^\infty (\diam D_i)^s \mid A\subset \cup_{i=1}^\infty D_i\}, 
\]
where the sets $D_i$ are, say, closed balls. In order to find a mass transference principle for general shapes, we look for a version of the singular value function that applies to all Borel sets and is also equivalent to the Hausdorff content. We come to the following formula for a Borel set $E\subset\R^d$ 
\begin{equation}\label{eq:phi}
\phi^s(E)=\sup_\mu \inf_{x\in E}\inf_{r>0}\frac{r^s}{\mu(B_r(x))},
\end{equation}
where the supremum is taken over Borel probability measures supported on $E$ and $B_r(x)$ denotes the ball of radius $r$ and center $x$. It is not hard to see that when $E$ is an ellipsoid this formula returns Falconer's singular value function (up to a multiplicative constant), so that our definition is indeed a generalization. 

We note here that the function $\phi^s$ is not the only way of approximating the Hausdorff content. While studying large intersection properties of some random covering sets, Persson \cite{P} defined an approximating function $g_s$, which was later related to the Hausdorff content under a positive density assumption, see \cite[Lemmas 3.2, 3.9]{FJJS}. Persson's definition is applicable for sets of positive Lebesgue measure. 

The following proposition relates the singular value function and the Hausdorff content for all bounded Borel sets $E$. We have formulated it for a bounded Borel set in a Euclidean space, but note that the proof for Suslin sets (analytic sets) \cite{C} in compact metric spaces is the same. The second inequality is actually Frostman's Lemma \cite[Theorem 8.8]{M}, only with better constant: by a simple bootstrapping argument we show that if Frostman Lemma holds, it holds with constant $6^s$.

\begin{proposition}\label{prop:content}
Let $E$ be a bounded Borel set. Then 
\[
\varphi^s(E)\le \cal H^s_\infty (E) \le 6^s\varphi^s(E). 
\]
\end{proposition}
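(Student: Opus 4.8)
The plan is to prove the two inequalities separately. For the left-hand bound $\phi^s(E)\le\cal H^s_\infty(E)$ I would use the mass distribution principle. Fix any Borel probability measure $\mu$ supported on $E$ and write $c_\mu=\inf_{x\in E}\inf_{r>0} r^s/\mu(B_r(x))$ for the quantity appearing in \eqref{eq:phi}, so that $\mu(B_r(x))\le r^s/c_\mu$ for every $x\in E$ and $r>0$. Given any countable cover $E\subset\bigcup_i D_i$, I discard the $D_i$ missing $E$, pick $x_i\in D_i\cap E$, and observe $D_i\subset B_{\diam D_i+\epsilon}(x_i)$; hence $\mu(D_i)\le(\diam D_i+\epsilon)^s/c_\mu$. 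Summing over $i$ and using $\mu(E)=1$ gives $\sum_i(\diam D_i+\epsilon)^s\ge c_\mu$, and letting $\epsilon\to0$ and taking the infimum over covers yields $\cal H^s_\infty(E)\ge c_\mu$. Taking the supremum over $\mu$ gives the claim, with constant $1$.

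For the right-hand bound I would realise Frostman's lemma through a maximal-measure argument and read off the constant $6^s$. Call a finite Borel measure $\nu$ admissible if $\nu(B_r(x))\le r^s$ for all $x,r$, and let $M=\sup\{\nu(\bar E):\nu\text{ admissible}\}$; weak-$*$ compactness of admissible measures on the compact set $\bar E$ provides a maximiser $\nu^*$. The key is that $\nu^*$ is \emph{saturated}: at $\cal H^s_\infty$-almost every $x\in E$ some ball is tight, $\nu^*(B_{r_x}(x))=r_x^s$, for otherwise a little mass could be added near $x$ without violating admissibility. These tight balls have bounded radii and their union contains $E$ up to a content-null set, so the greedy ($3r$) covering lemma extracts a \emph{disjoint} subfamily $\{B_{r_j}(x_j)\}$ with $E\subset\bigcup_j 3B_{r_j}(x_j)$ off a content-null set. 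Then
\[
\cal H^s_\infty(E)\le\sum_j\big(\diam 3B_{r_j}(x_j)\big)^s=6^s\sum_j r_j^s=6^s\sum_j\nu^*\big(B_{r_j}(x_j)\big)\le 6^s\,\nu^*(\bar E)=6^s M,
\]
where the last inequality uses disjointness. Normalising $\nu^*$ to a probability measure shows $\phi^s(E)\ge M\ge 6^{-s}\cal H^s_\infty(E)$, which is the asserted inequality. The factor $6=2\cdot 3$ is transparent here: the $2$ comes from passing between radius and diameter and the $3$ from the $3r$-covering lemma.

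The hard part will be the saturation step, and this is exactly the bootstrapping the statement alludes to. To justify that mass can be added at a point of positive local content where every ball still has slack, I would localise the qualitative Frostman lemma \cite[Theorem 8.8]{M} to the piece $E\cap B_\rho(x)$ to produce \emph{some} admissible mass there, and then choose the amount and the scale $\rho$ quantitatively so that the enlarged measure remains admissible; iterating this is what upgrades the qualitative lemma to the clean constant $6^s$. Two further technical points must be handled: the points of $E$ outside $\spt\nu^*$ carry no content and may be discarded, and the measure $\nu^*$, although a priori living on $\bar E$, is carried by $E$ by inner regularity since its mass is assembled from Frostman measures on subsets of $E$ (this is where the Suslin formulation is convenient). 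The minor mismatch between the open balls of \eqref{eq:phi} and the covering sets of $\cal H^s_\infty$ is absorbed by the infinitesimal dilations already used in the first inequality. The remaining ingredients—weak-$*$ compactness, countable subadditivity of $\cal H^s_\infty$, and the greedy $3r$-covering lemma—are standard.
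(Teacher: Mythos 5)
Your proof of the first inequality is correct and essentially identical to the paper's: both apply the mass distribution principle to a (near-)optimal measure for $\phi^s$, enclosing each covering set in a ball centred in $E$ of comparable diameter. No issues there.

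The second inequality is where the problem lies. Your architecture (a maximal admissible measure $\nu^*$ with $\nu^*(B_r(x))\le r^s$, followed by a $3r$-covering of $E$ by tight balls) is a genuinely different and classical-looking route, and the final chain of inequalities would indeed give the constant $6^s$ \emph{if} the saturation claim held. But the saturation step, which you yourself flag as the hard part, is asserted with a justification that does not work, and in the form you state it (``at $\cal H^s_\infty$-almost every $x\in E$ some ball centred at $x$ is tight'') it is not clear it is even true. Two concrete obstructions. First, to add a small mass $\delta>0$ supported near a slack point $x$ you must preserve $\nu(B_r(y))\le r^s$ for \emph{every} ball $B_r(y)$ meeting the support of the added mass, including balls centred far from $x$; knowing that balls \emph{centred at} $x$ have slack gives no lower bound on the slack of a ball that merely \emph{contains} $x$ (if $x\in B_r(y)$ one only gets $\nu^*(B_r(y))\le\nu^*(B_{2r}(x))<(2r)^s$, which is useless). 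Second, even for balls centred at $x$ the slack $r^s-\nu^*(B_r(x))$ may tend to $0$ along a sequence of radii, so no single $\delta>0$ is admissible at all scales. This is precisely why the paper's proof (a) defines the bad set $A_{\eta,\varepsilon}$ as the set of $x$ such that \emph{every} ball $B_r(y)$ with $y\in E$ containing $x$ satisfies the quantitative bound $\mu_\eta(B_r(y))<(1-\varepsilon)r^s/\varphi^s(E)$, and (b) proves only that $\cal H^s_\infty(A_{\eta,\varepsilon})$ is \emph{small} (Lemma \ref{lem:Hslittle}, via mixing a Frostman measure on $A$ into $\mu_\eta$ and deriving a contradiction with the definition of $\varphi^s$), rather than null, after which one covers $E\setminus A_{\eta,\varepsilon}$ by nearly tight balls and takes a two-parameter limit $\eta=\varepsilon_n^2$, $\varepsilon_n\to0$. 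To repair your argument you would have to replace ``content-a.e.\ saturation'' by such a quantitative statement about a bad set defined through all balls containing $x$, at which point you have essentially reproduced the paper's bootstrapping; the maximal-measure formulation does not let you avoid it. (The remaining technical points you list --- that $\nu^*$ is carried by $E$ rather than $\bar E$, and the attainment of the maximal radius needed for the greedy $3r$-selection --- are real but minor by comparison.)
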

\begin{proof}
Let $\epsilon>0$. Find a probability measure $\mu$ such that for every $r>0$ and every $x\in E$ we have
\[
\mu(B_r(x)) < \frac{r^s}{\varphi^s(E)-\varepsilon}.
\]
As every ball intersecting $E$ is contained in a ball centred in $E$ of twice the radius, without assuming $x\in E$ we still have
\[
\mu(B_r(x)) < \frac{2^s r^s}{\varphi^s(E)-\varepsilon} = \frac{(\diam B_r(x))^s}{\varphi^s(E)-\varepsilon}.
\]
Thus, for any collection of balls $D_i$ covering $E$ we have
\[
\sum_i (\diam D_i)^s > (\varphi^s(E)-\varepsilon) \sum_i \mu(D_i) \geq \varphi^s(E)-\varepsilon.
\]
This proves the first inequality. 

We will need some preparation to prove the second inequality. First, for $\eta>0$, let $\mu_\eta$ be a Borel probability measure supported on $E$ such that

\begin{equation}\label{eq:choicemu}
\inf_{x\in E} \inf_{r>0} \frac {r^s} {\mu_\eta(B_r(x))} > \varphi^s(E) (1- \eta), 
\end{equation}
and denote 
\[
Z=\inf_{x\in E} \inf_{r>0} \frac {r^s} {\mu_\eta(B_r(x))}\le \phi^s(E). 
\]
For $\varepsilon>0$ let $A=A_{\eta, \varepsilon}$ be the set of those points $x\in E$ for which the following is true: for all $y\in E$ and $r>0$, if $x\in B_r(y)$ then $\mu_\eta(B_r(y)) < (1-\varepsilon)r^s/\varphi^s(E)$. 

By Frostman's Lemma (\cite[Section II]{C}, \cite[Theorem 8.8]{M}), there exists  a constant $c_d$ only depending on $d$ such that whenever $\cal H^s(K)>0$ for a bounded Borel set $K$, there is a Borel probability measure $m$ supported on $K$ such that

\[
m(B_r(x)) \leq \frac {1} {c_d{\cal H}_\infty^s(K)} r^s
\]
for all $r>0, x\in \R^d$, where $c_d$ only depends on $d$.

We wish to prove the following fact; that $\cal H^s_\infty (A)$ is relatively small. 

\begin{lemma} \label{lem:Hslittle}
\[
{\cal H}^s_\infty (A)\leq \frac{\varphi^s(E) }{ c_d(1-\varepsilon+\tfrac \varepsilon \eta)}.
\]
\end{lemma}

\begin{proof}[Proof of Lemma \ref{lem:Hslittle}]
Assume  without loss of generality that ${\cal H}^s_\infty(A)>0$. Then also $\cal H^s(A)>0$ and by \cite[Section II]{C}, \cite[Theorem 8.8]{M}, as above, there exists  a measure $\nu$ supported on $A$ such that

\[
\nu(B_r(x)) \leq \frac {1} {c_d{\cal H}_\infty^s(A)} r^s
\]
for all $r>0, x\in \R^d$. For $\delta\in (0,1)$ let $\nu_\delta=(1-\delta)\mu_\eta + \delta \nu$ and denote

\[
Z_\delta = \inf_{x\in E} \inf_{r>0} \frac {r^s} {\nu_\delta(B_r(x))}.
\]

Choose some $x\in E, r>0$. If $B_r(x)\cap A=\emptyset$, then from \eqref{eq:choicemu}
\begin{align*}
\nu_\delta(B_r(x))&= (1-\delta) \mu_\eta(B_r(x)) \\
&\leq \frac {1-\delta} {1-\eta} \cdot\frac {r^s} {\varphi^s(E)}=: G_1(\delta).
\end{align*}
Otherwise, by definitions of $A$ and $\nu$
\begin{align*}
\nu_\delta(B_r(x)) &= (1-\delta) \mu_\eta(B_r(x)) + \delta \nu(B_r(x)) \\
&\leq r^s\left(\frac {(1-\delta)(1-\varepsilon)} {\varphi^s(E)} + \frac \delta {c_d{\cal H}^s_\infty(A)}\right)=:G_2(\delta).
\end{align*}

For $\delta > \eta$ we have $G_1(\delta) < r^s/\varphi^s(E)$. If, contrary to the claim, we have
\begin{equation} \label{eqn:delta}
{\cal H}^s_\infty(A) > \varphi^s(E) \cdot (c_d(1-\varepsilon+\frac {\varepsilon} {\eta}))^{-1}
\end{equation}
then $G_2(\eta) < r^s/\varphi^s(E)$. It follows that for some $\delta>\eta$ 

\[
Z_\delta \geq \max(r^s/G_1(\delta), r^s/G_2(\delta)) > \varphi^s(E),
\]
which is a contradiction with the definition of $\varphi^s$. Thus, \eqref{eqn:delta} cannot hold.
\end{proof}

We can now compare $\varphi^s(E)$ to $\cal H^s_\infty(E)$. Let $(D_i)$ be the family of balls $B_r(x)$ with $x\in E$ for which $\mu_\eta(B_r(x)) \geq (1-\epsilon) r^s/Z$. This family can be infinite (even uncountable), but it contains a ball of maximal radius (possibly more than one). We will inductively construct a subfamily $(E_j)\subset (D_i)$ in the following way. We take the largest ball from $(D_i)$, which is the first ball in $(E_j)$. We then inductively add to $(E_j)$ the largest ball from $(D_i)$ not contained in $\bigcup_j 3E_j$ (where $3E_j$ means the ball with the same center as $E_j$ but three times larger radius). This way we construct a family of disjoint balls $B_r(x)$ satisfying $\mu_\eta(B_r(x)) \geq (1-\epsilon) r^s/Z$ and such that $\bigcup B_{3r}(x)\supset E\setminus A$.

From now on, consider the sequences $\varepsilon_n\to 0$ and $\eta_n=\varepsilon_n^2$ fixed. Then, by Lemma \ref{lem:Hslittle} above, 
\[
\cal H^s_\infty (A_{\eta_n, \epsilon_n})\le \frac{\varphi^s(E) }{ c_d(1-\varepsilon_n+\tfrac {\varepsilon_n} {\eta_n})}\le \frac{\varphi^s(E)}{c_d(1-\epsilon_n +\tfrac 1{\epsilon_n})}=:\ell_n. 
\]
By subadditivity of $\cal H^s_\infty$, for all $n$
\begin{align*}
\cal H^s_\infty(E)&\le \cal H^s_\infty(E\setminus A_{\eta_n, \epsilon_n}) +\cal H^s_\infty(A_{\eta_n, \epsilon_n})\leq \sum_{B_r(x)\in E_j} (6r)^s + \ell_n\\
&\leq 6^s Z/(1-\epsilon_n) \sum \mu_\eta(B_r(x))+\ell_n \leq 6^s Z/(1-\varepsilon_n)+\ell_n.
\end{align*}
As $\epsilon_n,\ell_n\to 0$ and $Z\le\varphi^s(E)$, this finishes the proof of the proposition. 
\end{proof}

We finish the section with a lemma showing that for open sets $E$ the supremum in \eqref{eq:phi} is attained over absolutely continuous measures. 

\begin{lemma} \label{lem:phis}
There exists $\kappa_1>0$ such that for every open bounded set $E\subset \R^d$ there exists an absolutely continuous probability measure $\eta$ of bounded density, such that the support of $\eta$ is a finite union of disjoint $d$-dimensional cubes contained in $E$ and

\[
\phi^s(E) \leq \kappa_1 \cdot \inf_{x\in E}\inf_{r>0}\frac{r^s}{\eta(E\cap B_r(x))}.
\]
\end{lemma}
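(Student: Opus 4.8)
The plan is to realise the supremum in \eqref{eq:phi} (up to a constant depending only on $d$) by discretising a near-optimal measure and smoothing it. First I would fix $\epsilon\in(0,1)$, write $c=\phi^s(E)$ (finite by Proposition \ref{prop:content} and assumed positive, the case $c=0$ being trivial), and choose a Borel probability measure $\mu$ supported on $E$ with the Frostman-type bound $\mu(B_r(x))<r^s/(c(1-\epsilon))$ for all $x\in E,\ r>0$; such a $\mu$ exists directly from the definition of the supremum. The idea is then to replace $\mu$ by an absolutely continuous $\eta$ obtained by spreading the $\mu$-mass of each cube of a fine grid uniformly over that cube, and to verify that this smoothing degrades the quantity $\inf_{x}\inf_{r} r^s/(\cdot)(B_r(x))$ by at most a bounded factor.

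To make the support a \emph{finite} union of cubes contained in $E$, I would use inner regularity to pick a compact $K\subset E$ with $\mu(K)\ge 1-\epsilon$. Since $K$ is compact and $E$ open, $2\rho:=\dist(K,\R^d\setminus E)>0$, so I may fix a grid of half-open cubes of side $\ell$ with $\sqrt d\,\ell<2\rho$ and let $\mathcal Q$ be the finite family of grid cubes meeting $K$. Each such cube $Q$ satisfies $Q\subset E$, because any point of $Q$ lies within $\diam Q=\sqrt d\,\ell<2\rho$ of $K$; using half-open cubes also guarantees that the sets $Q\cap K$ are genuinely disjoint. I then set
\[
\eta=\sum_{Q\in\mathcal Q}\frac{\mu(Q\cap K)}{\mu(K)}\,\frac{\lambda|_Q}{\lambda(Q)},
\]
an absolutely continuous probability measure of bounded density whose support is the finite disjoint union $\bigcup_{Q\in\mathcal Q}Q\subset E$. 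In particular $\eta(E\cap B_r(x))=\eta(B_r(x))$, so both sides of the lemma refer to the same quantity.

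The core is to bound $\eta(B_r(x))$ for $x\in E$ in two regimes. For large radii $r\ge\ell$ I would use that every cube meeting $B_r(x)$ lies in $B_{r+\sqrt d\,\ell}(x)$ and that the $Q\cap K$ are disjoint, giving $\eta(B_r(x))\le\mu(K)^{-1}\mu(B_{r+\sqrt d\,\ell}(x))$; the Frostman bound together with $r+\sqrt d\,\ell\le(1+\sqrt d)r$ then yields $r^s/\eta(B_r(x))\gtrsim_d\mu(K)\,c(1-\epsilon)$. For small radii $r<\ell$ I would instead use the bounded density: applying the Frostman bound to a ball of radius $\sqrt d\,\ell$ about a point of $Q\cap K\subset E$ controls $\mu(Q)$, so the density of $\eta$ is at most a constant times $\ell^{s-d}/(\mu(K)c(1-\epsilon))$; since $s\le d$ this gives, for $r<\ell$, that $r^s/\eta(B_r(x))\ge r^{s-d}/(\text{density}\cdot\omega_d)\gtrsim_d\mu(K)\,c(1-\epsilon)$ as well. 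Combining the regimes and fixing $\epsilon$ (so $\mu(K)\ge1-\epsilon$) yields the claim, with $\kappa_1$ depending only on $d$ once the $s$-dependent constants are bounded using $0\le s\le d$.

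The main obstacle I anticipate is exactly the small-radius regime: smoothing $\mu$ into an absolutely continuous measure creates positive density, so a priori $\eta(B_r(x))$ could dominate $r^s$ as $r\to0$. What rescues the estimate is that the density is not arbitrary but pinned down by the Frostman bound at the single scale $\ell$; because $s\le d$ the ratio $r^{s-d}$ is worst at $r=\ell$, where it matches the large-radius estimate. Lining up these two constants, and checking that the truncation to $K$ costs only the harmless factor $\mu(K)^{-1}$, is the technical heart of the argument.
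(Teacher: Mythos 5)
Your argument is correct and follows essentially the same route as the paper: take a near-optimal measure for $\phi^s(E)$, push its mass away from $\partial E$, mollify at a single scale, and verify the Frostman-type bound separately for radii above and below that scale. The only difference is that you mollify by redistributing mass uniformly over grid cubes rather than by convolving with the normalized indicator of a $\delta$-ball, which has the minor advantage of producing the finite union of cubes directly (modulo the cosmetic point that closures of adjacent grid cubes touch, so one should shrink them slightly to get genuinely disjoint cubes as the statement requires).
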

\begin{proof}
Fix $\varepsilon >0$. Let $\mu_1$ be a probability measure supported on $E$ such that 

\[
\phi^s(E) \leq (1+\varepsilon) \cdot \inf_{x\in E}\inf_{r>0}\frac{r^s}{\mu_1(E\cap B_r(x))}.
\]

For $\delta >0$ let $E_\delta$ denote the points in $E$ lying at distance greater than $\delta$ from $\partial E$. We choose $\delta$ so small that $\mu_1(E_\delta) \geq 1-\varepsilon$ and define

\[
\mu_2 = \frac 1 {\mu_1(E_\delta)} \mu_1|_{E_\delta}.
\]
Note

\begin{equation}\label{eq:mu2}
\inf_{x\in E}\inf_{r>0}\frac{r^s}{\mu_2(E\cap B_r(x))} \geq (1-\varepsilon) \inf_{x\in E}\inf_{r>0}\frac{r^s}{\mu_1(E\cap B_r(x))}.
\end{equation}

Let $f_\delta$ be the normalized characteristic function of $B_\delta(0)$ and define
\[
d\mu_3(x) = \int f_\delta(x-y) d\mu_2(y).
\]
This is an absolutely continuous probability measure with density bounded by $(\lambda(B_\delta(0)))^{-1}$. For $x\in E$ and $r\geq\delta$ we have
\begin{equation}\label{eq:estimate1}
\mu_3(B_r(x)) \leq \mu_2(B_{r+\delta}(x)) \leq \mu_2(B_{2r}(x)),
\end{equation}
since the measure $\mu_3$ is obtained from the measure $\mu_2$ by redistributing it inside a $\delta$-neighbourhood of each point. For $x\in E$ and $r<\delta$ we have

\begin{equation}\label{eq:estimate2}
\mu_3(B_r(x)) \leq \frac {r^d} {\delta^d} \mu_2(B_{r+\delta}(x)) \leq \frac {r^d} {\delta^d} \mu_2(B_{2\delta}(x)).  
\end{equation}
Here we use the fact that for any $x$ the density of $\mu_3$ at $x$ equals $(\pi_d)^{-1}
\delta^{-d} \mu_2(B_\delta(x))$, where $\pi_d$ is the volume of a $d$-dimensional ball. By \eqref{eq:estimate1} and \eqref{eq:estimate2}, for every $x\in E$ and $r>0$ one can find $r'>0$ such that

\begin{equation}\label{eq:tocompare}
\frac {r^s} {\mu_3(B_r(x))} \geq 2^{-s} \frac {(r')^s} {\mu_2(B_{r'}(x))}.
\end{equation}

Finally, we choose some finite union $F\subset E$ of disjoint cubes such that $\mu_3(F) \geq 1-\varepsilon$ and define 

\[
\eta= \frac 1 {\mu_3(F)} \mu_3|_F.
\]
We have

\[
\inf_{x\in E}\inf_{r>0}\frac{r^s}{\eta(E\cap B_r(x))} \geq (1-\varepsilon) \inf_{x\in E}\inf_{r>0}\frac{r^s}{\mu_3(E\cap B_r(x))}. 
\]
Combining this with equations \eqref{eq:tocompare} and \eqref{eq:mu2}, and recalling the choice of $\mu_1$, we finish the proof with $\kappa_1$ arbitrarily close to $2^s$. 
\end{proof}



\section{Statement of results}\label{sec:theorems}
The following is the main theorem of this article. 
\begin{theorem} \label{thm:main}
Let $(B_i)$ be a sequence of balls in $[0,1]^d\subset \R^d$ such that $\lambda(\limsup_{i\to \infty}B_i)=1$. Let $(E_i)$ be a sequence of open sets, such that $E_i\subset B_i$.
Define
\[
s=\sup \{t\mid \lambda(\limsup \{B_i\mid \varphi^t(E_i)\ge \lambda(B_i)\})=1\}.
\]
Then
\[
\dimH \limsup E_i\ge s
\]
and
\[
\dimP \limsup E_i= d.
\]
\end{theorem}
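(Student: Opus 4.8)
The plan is to prove the two assertions with a single Cantor-type measure. Throughout I fix $t<s$ and restrict attention to the \emph{good} balls $G=\{i:\varphi^t(E_i)\ge\lambda(B_i)\}$, which by the definition of $s$ still satisfy $\lambda(\limsup_{i\in G}B_i)=1$. The reason for passing to $G$ is that, by Proposition~\ref{prop:content}, every good $E_i$ carries large Hausdorff content, $\mathcal H^t_\infty(E_i)\ge\varphi^t(E_i)\ge\lambda(B_i)$, which is exactly the scaling that drives the classical mass transference principle, now transported to arbitrary shapes. From the full-measure hypothesis I would extract the geometric input that makes the construction run (the local ubiquity statement to be established in Section~\ref{sec:construction}): for every cube $Q\subset[0,1]^d$ and every $\rho>0$ there is a finite, pairwise disjoint family of good balls $B_i\subset Q$ with $\diam B_i<\rho$ and $\sum_i\lambda(B_i)\ge c_0\lambda(Q)$, with $c_0$ dimensional. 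This is a Vitali/$5r$-covering consequence of $\lambda(\limsup_{i\in G}B_i\cap Q)=\lambda(Q)$.

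The nodes of the Cantor construction are $d$-dimensional cubes. Passing from a node $Q$ to the next generation I would (i) select good balls $B_i\subset Q$ as above; (ii) inside each chosen $E_i$ invoke Lemma~\ref{lem:phis} to produce an absolutely continuous probability measure $\eta_i$, supported on finitely many cubes contained in $E_i$, with $\eta_i(B_r(x))\le\kappa_1 r^t/\varphi^t(E_i)\le\kappa_1 r^t/\lambda(B_i)$; and (iii) take these cubes, \emph{subdivided as finely as I wish}, as the children of $Q$. The limit set is a compact subset of $\limsup_{i\in G}E_i\subset\limsup E_i$, and I define $\mu$ by splitting the mass of $Q$ among the $B_i$ in proportion to $\lambda(B_i)$ and then distributing it inside $E_i$ according to $\eta_i$. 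It is crucial that the construction is sequential, so that having built a generation I may choose the next ball radii and the fineness of the next cube subdivision as small as I please.

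The heart of the argument, and the step I expect to be hardest, is the local-dimension estimate of Section~\ref{sec:localdim}: for each $\epsilon>0$ one should arrange $\mu(B_r(x))\le C\,r^{t-\epsilon}$ for all $x$ and all small $r$, whence $\dimH\limsup E_i\ge t-\epsilon$, and then let $\epsilon\to0$ and $t\to s$. The difficulty is a mismatch of dimensions: a node receives mass apportioned by the $d$-dimensional quantity $\lambda(B_i)$, whereas $\eta_i$ can only spread it over a $t$-dimensional profile, so each transition introduces a density factor of order $(\diam Q)^{t-d}\ge1$ that threatens to accumulate across generations. The remedy is scale separation. Bounding $\mu(B_r(x))$ by cases — according to whether $B_r(x)$ meets many children of its enclosing node, where disjointness of the $B_i$ and $\sum\lambda(B_i)\lesssim r^d$ give the estimate directly, or lies inside a single $E_i$, where one uses the $\eta_i$-bound — shows that each accumulated factor can be absorbed into the $r^{\epsilon}$ slack provided the successive ball radii and cube sizes decrease rapidly enough; writing $\tau_k$ for the $k$-th cube size and $\sigma_{k+1}$ for the next ball radius, the quantitative requirement is of the form $\log(1/\sigma_{k+1})\gtrsim\epsilon^{-1}(d-t)\log(1/\tau_k)$. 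Checking that these finitely many scale inequalities can be satisfied simultaneously at every generation, so that $C$ is uniform in $r$, is the main technical obstacle; Proposition~\ref{prop:content} and Lemma~\ref{lem:phis} are exactly what make the per-component estimates available for arbitrary $E_i$.

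Finally, the packing dimension. As $\limsup E_i\subset[0,1]^d$ the inequality $\dimP\le d$ is immediate, so only $\dimP\ge d$ requires work, and I would read it off the \emph{same} measure $\mu$ (for any admissible parameter, e.g.\ $t=0$, the construction still goes through). At scales just above the next ball radius the measure behaves like Lebesgue, $\mu(B_r(x))\approx\mu(Q)\,r^d/\lambda(Q)$, so by choosing those radii extremely small one finds, at each generation and at every point of the support, scales $r_k\to0$ along which $\log\mu(B_{r_k}(x))/\log r_k\to d$. Hence the upper local dimension of $\mu$ equals $d$ everywhere on its support, which forces $\dimP\limsup E_i\ge d$. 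This is the direct computation; alternatively $\limsup E_i$ is a dense $G_\delta$, since by local ubiquity every cube contains good balls $B_i$, hence nonempty open $E_i$, of arbitrarily large index, and one may instead cite \cite{SV}.
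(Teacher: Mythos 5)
Your proposal is correct and follows essentially the same route as the paper: a Cantor construction driven by a $5r$-covering ubiquity lemma extracted from the full-measure hypothesis, the Frostman-type measures $\eta_i$ of Lemma \ref{lem:phis} on finite unions of cubes inside each $E_i$, a mass distribution split in proportion to $\lambda(B_i)$, a two-case local dimension estimate with rapidly decreasing cube sizes absorbing the accumulated density factors $\ell_i\lambda(B_i)/\kappa_2$ into the $r^{\epsilon}$ slack, and the packing dimension read off the Lebesgue-like behaviour of the same measure at intermediate scales. The only cosmetic difference is that the paper works with the disjointness of the dilated balls $3B_i$ (which the covering theorem supplies anyway) and reduces to the fixed-exponent statement of Theorem \ref{thm:aux} rather than carrying $t<s$ throughout.
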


The claim $\dimP \limsup E_i= d$ also follows by observing that $\limsup E_i$ is a dense $G_\delta$-set \cite[Fact 12]{SV}, but we give a direct proof. We will actually prove the following result; it is clear that Theorem \ref{thm:main} is an immediate corollary.

\begin{theorem} \label{thm:aux}
Let $(B_i)$ be a sequence of balls in $[0,1]^d\subset \R^d$ such that $\lambda(\limsup_{i\to \infty}B_i)=1$. Let $(E_i)$ be a sequence of open sets, such that $E_i\subset B_i$. Assume that for some $s\geq 0$ each pair $(B_i, E_i)$ satisfies
\[
\phi^s(E_i) \geq \lambda(B_i).
\]
Then
\[
\dimH \limsup E_i \geq s
\]
and
\[
\dimP \limsup E_i= d.
\]
\end{theorem}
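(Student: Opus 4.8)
The plan is to prove the two dimension estimates separately, with the Hausdorff bound following the Beresnevich--Velani strategy from \cite{BV} but with the concentrically scaled balls of that paper replaced by the general shapes $E_i$, the role previously played by the radius-to-a-power scaling now taken over by the generalised singular value function via Lemma \ref{lem:phis} and Proposition \ref{prop:content}. The only place the measure hypothesis enters is through a covering input: since $\lambda(\limsup B_i)=1$, for every ball $D\subset[0,1]^d$, every threshold $N$, and every $\rho>0$ there is a finite disjoint family $B_{i_1},\dots,B_{i_k}$ with $i_j\ge N$, $B_{i_j}\subset D$, $\diam B_{i_j}<\rho$ and $\sum_j\lambda(B_{i_j})\ge c_0\,\lambda(D)$ for a fixed $c_0=c_0(d)>0$. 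This is the usual Vitali-type consequence of full measure (one may assume $\diam B_i\to0$). By the assumption of Theorem \ref{thm:aux} every selected ball automatically satisfies $\varphi^s(E_{i_j})\ge\lambda(B_{i_j})$, so each selected $E_{i_j}$ carries the content we need.

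For the bound $\dimH\limsup E_i\ge s$ I would construct a Cantor set $\mathbb K\subset\limsup E_i$ carrying a probability measure $\nu$, and apply the mass distribution principle. The construction is iterative over generations of cubes. Inside a generation cube $Q$ carrying mass $M$, apply the covering lemma to select disjoint balls $B_i\subset Q$ of tiny diameter and distribute $M$ among them proportionally to $\lambda(B_i)$, so that $m_i=M\lambda(B_i)/\sum_l\lambda(B_{i_l})$; inside each $B_i$ invoke Lemma \ref{lem:phis} to obtain an absolutely continuous measure $\eta_i$ supported on a finite union $F_i\subset E_i$ of cubes with $\eta_i(B_r(x))\le\kappa_1 r^s/\varphi^s(E_i)\le\kappa_1 r^s/\lambda(B_i)$, and spread the mass $m_i$ of $B_i$ according to $\eta_i$. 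The cubes of $F_i$ form the next generation, and $\nu$ is the weak-$*$ limit of the generation measures, supported on $\mathbb K\subset\limsup E_i$.

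The heart of the argument is the multi-scale Frostman estimate $\nu(B_r(x))\le C_\epsilon\, r^{\,s-\epsilon}$, which I would verify by splitting $r$ according to the generation scales. In the range from the cube scale $\diam F_i$ up to the ball scale $\diam B_i$ the estimate is governed by the intra-$E_i$ bound $\nu(B_r(x))=m_i\,\eta_i(B_r(x))\le\kappa_1(m_i/\lambda(B_i))\,r^s$; this is exactly where $\varphi^s(E_i)\ge\lambda(B_i)$ and Lemma \ref{lem:phis} are used, and it is the new ingredient relative to \cite{BV}. In the complementary range, where $B_r(x)$ meets several selected balls of a common parent cube, disjointness gives the $d$-dimensional bound $\nu(B_r(x))\lesssim(M/\lambda(Q))\,r^d$, harmless since $s\le d$; scales below $\diam F_i$ recurse into the next generation. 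The accumulated density factors are finite at each generation, and are absorbed by choosing the diameters of each generation small enough---depending on the generations already built---that they are dominated by $r^{-\epsilon}$; this costs an arbitrarily small $\epsilon$, and letting $\epsilon\to0$ yields $\dimH\mathbb K\ge s$. Controlling this interplay between the $s$-dimensional spreading inside each $E_i$ and the $d$-dimensional spreading across the disjoint balls, while keeping the constants under control through the generations, is the main obstacle.

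For the packing bound, $\dimP\limsup E_i\le d$ is trivial, so it remains to show $\dimP\limsup E_i\ge d$. First, $\limsup E_i=\bigcap_n\bigcup_{i\ge n}E_i$ is a $G_\delta$ set, and it is dense: given any ball $D$ and any $n$, the covering lemma furnishes some $i\ge n$ with $B_i\subset D$, whence $\emptyset\ne E_i\subset D$. Thus $\limsup E_i$ is comeager in $[0,1]^d$. Using the characterisation $\dimP A=\inf\{\sup_n\overline{\dim}_B A_n:A\subset\bigcup_n A_n\}$ together with the invariance $\overline{\dim}_B A_n=\overline{\dim}_B\overline{A_n}$ of the upper box dimension under closure, any countable cover $\limsup E_i=\bigcup_n A_n$ has, by Baire's theorem, a piece $A_n$ that is not nowhere dense, so $\overline{A_n}$ contains a ball and $\overline{\dim}_B A_n=d$. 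Hence $\dimP\limsup E_i\ge d$, completing the proof.
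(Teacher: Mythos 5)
Your construction for the Hausdorff bound is essentially the paper's own proof: the same Vitali-type covering lemma extracted from $\lambda(\limsup B_i)=1$ (the paper's Lemma \ref{lem:kgb}), the same two-tier mass distribution (proportional to $\lambda(B_i)$ across the selected disjoint balls, then via the measure $\eta_i$ of Lemma \ref{lem:phis} inside each $E_i$), the same two-regime local estimate ($r^d$ across balls, $r^s$ inside a single $E_i$ using $\phi^s(E_i)\ge\lambda(B_i)$), and the same device of shrinking each generation's cube scale to absorb the accumulated density constants into an $r^{-\epsilon}$ loss, finishing with the mass distribution principle.

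The one place you diverge is the packing dimension: you obtain $\dimP\limsup E_i\ge d$ by the Baire category route (dense $G_\delta$, hence every countable cover has a somewhere-dense piece, whose closure has upper box dimension $d$), whereas the paper explicitly mentions this route (crediting \cite{SV}) but instead reads the bound off the same measure $\mu$: its Case 1 estimate shows that at every point of the Cantor set the ratio $\log\mu(B_r(x))/\log r$ is at least $d-q_n$ on the range of scales $[\diam B_{i_n},\tilde r_{n-1})$, so the upper local dimension of $\mu$ is $d$ everywhere on its support. Your route is shorter and needs only density of $\limsup E_i$ (which your covering lemma gives); the paper's route is self-contained within the construction and localises the packing statement to the Cantor subset $F$. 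Both are correct.
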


\begin{remark}
In particular, the sets $(E_i)$ being balls as in \cite{BV} or ellipsoids as in \cite{WWX} satisfy the assumptions of Theorem \ref{thm:main}, so that Theorem \ref{thm:main} recovers these dimension results. Furthermore, as is the case in \cite{BV, WWX}, the lower bound we provide can be sharp, see e.g. \cite[Corollary 5.1]{WWX}. 
\end{remark}

By Proposition \ref{prop:content}, we have the following corollary:

\begin{corollary} \label{cor:cusps}
Let $(B_i)$ be a family of balls in $[0,1]^d$, such that $\lambda (\limsup B_i) =1$. For some $s\in (0,d)$ for every $i$ let $E_i\subset B_i$ be an open set satisfying $\cal H^s_\infty(E_i) \geq \lambda(B_i)$. Then $\dimH \limsup E_i \geq s$ and $\dimP \limsup E_i =d$.
\end{corollary}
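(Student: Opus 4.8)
The plan is to reduce the Hausdorff content hypothesis to the singular value function hypothesis of Theorem \ref{thm:aux} and then apply that theorem. By the second inequality of Proposition \ref{prop:content}, the assumption $\cal H^s_\infty(E_i)\ge\lambda(B_i)$ yields $\varphi^s(E_i)\ge 6^{-s}\cal H^s_\infty(E_i)\ge 6^{-s}\lambda(B_i)$ for every $i$. Thus each pair satisfies the hypothesis of Theorem \ref{thm:aux} up to the multiplicative constant $6^{-s}$, and the whole difficulty is to remove this constant.

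The key point is that $s<d$, so the constant can be absorbed by rescaling. For $c\in(0,1)$ the generalised singular value function scales as $\varphi^s(cE)=c^s\varphi^s(E)$, directly from \eqref{eq:phi}, while $\lambda(cB)=c^d\lambda(B)$. Hence for the scaled pair $(cB_i,cE_i)$ one has $\varphi^s(cE_i)=c^s\varphi^s(E_i)\ge c^s6^{-s}\lambda(B_i)$, and since $d-s>0$ we may fix an integer $M\ge 6^{s/(d-s)}$ and set $c=1/M$, so that $c^{d-s}\le 6^{-s}$ and therefore $\varphi^s(cE_i)\ge c^d\lambda(B_i)=\lambda(cB_i)$ for every $i$. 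I would then tile $[0,1]^d$ by the $M^d$ closed cubes of side $c$, and in each cube place a scaled and translated copy of the whole family, obtaining new pairs $(B_i^{(k)},E_i^{(k)})$ with $E_i^{(k)}\subset B_i^{(k)}$ open and $\varphi^s(E_i^{(k)})\ge\lambda(B_i^{(k)})$, both quantities being invariant under translation. Inside the $k$-th cube the rescaled limsup set $\limsup_i B_i^{(k)}$ has full measure $c^d$ in that cube, so summing over the $M^d$ cubes gives $\lambda(\limsup_{i,k}B_i^{(k)})=M^d c^d=1$.

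The combined family now satisfies every hypothesis of Theorem \ref{thm:aux} with exponent $s$, so $\dimH\limsup_{i,k}E_i^{(k)}\ge s$ and $\dimP\limsup_{i,k}E_i^{(k)}=d$. Finally I would transfer this back to the original family: by the finite pigeonhole, $\limsup_{i,k}E_i^{(k)}=\bigcup_{k=1}^{M^d}\bigl(c\,\limsup_iE_i+v_k\bigr)$ is a finite union of similar copies of $\limsup_iE_i$, where the $v_k$ are the corners of the cubes. Since both Hausdorff and packing dimension are invariant under similarities and finitely stable, the dimension of the union equals $\dimH\limsup_iE_i$ (resp. $\dimP\limsup_iE_i$), which therefore inherits the bounds $\dimH\limsup_iE_i\ge s$ and $\dimP\limsup_iE_i=d$, the upper bound $\dimP\le d$ being automatic from $E_i\subset[0,1]^d$. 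The only real obstacle is the constant $6^{-s}$ from Proposition \ref{prop:content}; the tiling argument above is precisely what exploits the gap $s<d$ to remove it, and it is the reason the hypothesis $s\in(0,d)$ appears in the statement.
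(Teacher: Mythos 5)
Your proof is correct, but it takes a genuinely different route from the paper's. The paper deduces the corollary in essentially one line from Proposition \ref{prop:content} together with Theorem \ref{thm:main} (not Theorem \ref{thm:aux}): the hypothesis gives $\varphi^s(E_i)\ge 6^{-s}\cal H^s_\infty(E_i)\ge 6^{-s}\lambda(B_i)$, exactly as you observe, and the offending constant $6^{-s}$ is then absorbed not by rescaling space but by the supremum over exponents built into the definition of $s$ in Theorem \ref{thm:main}. Indeed, for any $t<s$, in \eqref{eq:phi} only radii $r\le\diam E_i$ are relevant for a measure supported on $E_i$, so $\varphi^t(E_i)\ge(\diam E_i)^{t-s}\varphi^s(E_i)\ge(\diam B_i)^{t-s}6^{-s}\lambda(B_i)$; since one may assume $\diam B_i\to 0$, this exceeds $\lambda(B_i)$ for all but finitely many $i$, hence the supremum in Theorem \ref{thm:main} is at least $s$. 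Your tiling-and-rescaling argument is a valid alternative: the scaling law $\varphi^s(cE+v)=c^s\varphi^s(E)$ does follow from \eqref{eq:phi} via pushforward of measures, the pigeonhole identity $\limsup_{i,k}E_i^{(k)}=\bigcup_k\bigl(c\limsup_iE_i+v_k\bigr)$ for finitely many interleaved subsequences is correct, and finite stability plus similarity invariance of both dimensions completes the transfer. What each approach buys: yours applies Theorem \ref{thm:aux} directly at the exact exponent $s$ and is fully self-contained, at the cost of genuinely needing $s<d$ (as you note); the paper's exponent-lowering route is shorter, works for any $s>0$, but only yields the bound as a supremum over $t<s$ and relies on the reduction to $\diam B_i\to 0$ made at the start of Section \ref{sec:construction}.
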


\begin{example} The following example shows that the assumption that $\limsup B_i$ have full Lebesgue measure cannot be relaxed to positive Lebesgue measure. 

Denote by $\Sigma_*$ the countable set $\bigcup_{n=0}^\infty \{0,1\}^n$. For a word $\omega\in \Sigma_*$ let $|\omega|$ denote its length. We will construct a countable family of closed intervals $B_\omega\in [0,1], \omega\in\Sigma_*$ such that $\lambda(\limsup B_\omega)>0$ but for every $a>1$ $\limsup E_\omega=\emptyset$, where $E_\omega$ is an interval with the same center as $B_\omega$ but with diameter $|B_\omega|^a$. In particular, there is no nontrivial lower bound for the dimension of $\limsup E_\omega$

Let $a_n=1/2(n+1)^2$. Let $B_\emptyset=[0,1]$. Inductively, for $\omega\in \Sigma_*$, define $J_\omega=a_{|\omega|} B_\omega^o$. Then let$B_{\omega 0}$ and $B_{\omega 1}$ be the left and right components of $B_\omega\setminus J_\omega$.

For every $n\geq 0$ we have $\lambda (\bigcup_{\omega \in \{0,1\}^n} B_\omega) = \prod_{i=0}^{n-1} (1-a_i) > \kappa := \prod_{i=0}^\infty (1-a_i) >0$. In particular, $|B_\omega| \geq \kappa \cdot 2^{-|\omega|}$ and $\lambda (\limsup B_\omega) = \kappa >0$ as desired. 

Choose $a>1$ and define $E_\omega$. There is $N=N(a)$ such that for all $n>N$ we have $(\kappa 2^{-n})^a < a_n \kappa 2^{-n}$. Thus, for $|\omega|>N$ we have $E_\omega \subset J_\omega$, hence $E_\omega$ eventually become disjoint with all $E_\nu, |\nu|>|\omega|$. This implies $\limsup E_\omega = \emptyset$.

\end{example}

The strategy of the proof of Theorem \ref{thm:main} is as follows: We will construct a large Cantor subset $F$ of $\limsup E_i$, define a mass distribution $\mu$ on the construction tree of $F$ and estimate the local dimension of $\mu$. This will give a lower bound to the dimension.

\section{Construction of the Cantor subset}\label{sec:construction}

We note that we can freely assume that the size of balls $B_i$ forms a nonincreasing sequence converging to 0. Indeed, the statement of the theorem does not depend on the order of $B_i$'s, and moreover if the size of the balls $B_i$ has a non-zero lower bound and if $\varphi^s(E_{n_i})>\lambda(B_{n_i})$ for some $s>0$ and some subsequence $E_{n_i}$ then by the definition of $\varphi$ we will have a nonzero lower bound for $\lambda(E_{n_i})$, and hence for $\lambda(\limsup E_{n_i})$ as well.

For a ball $B$, denote by $MB$ a ball of the same center and $M$ times the radius. The following lemma has been proven as \cite[Lemma 5]{BV}, but, as it is a crucial ingredient in the construction of the Cantor set $F$, for completeness we present a proof.
\begin{lemma}\label{lem:kgb}
Assume $\lambda(\limsup B_i)=1$. Then there exists $\kappa_2>0$ such that for every cube $C\subset [0,1]^d$ there exists a finite family of balls $B_{n_i}\subset C$ such that the balls $3B_{n_i}$ are pairwise disjoint and that
\[
\sum \lambda(B_{n_i}) \geq \kappa_2 \lambda(C).
\]
\end{lemma}
\begin{proof}
Let $r$ denote the side of $C$. As the diameter of balls $B_i$ converges to 0, for any positive $\varepsilon$ we know that up to a set of zero Lebesgue measure

\[
\bigcup_{i; B_i\subset C} 3B_i \supset \bigcup_{i; B_i\subset C} B_i \supset (1-\varepsilon)C,
\]
where $(1-\varepsilon)C$ denotes a cube of the same center as $C$ but of side $(1-\varepsilon)r$.

Applying the $5r$-covering theorem \cite[Theorem 2.1]{M}, we find a (finite or countable) subfamily of balls $B_{i_k}\subset C$ such that up to a set of measure zero

\[
\bigcup 15 B_{i_k} \supset (1-\varepsilon)C,
\]
and that the balls $3B_{i_k}$ are disjoint. Hence,

\[
\sum \lambda(B_{i_k}) \geq r^d (1-\varepsilon)^d 15^{-d}
\]
and we can choose a finite subfamily such that

\[
\sum \lambda(B_{i_k}) \geq r^d (1-2\varepsilon)^d 15^{-d}.
\]
As $\lambda(C)=r^d$, we are done.
\end{proof}


We will now begin the construction of the Cantor set $F$. First, for every set $E_i$ denote by $\eta_i$ the absolutely continuous measure provided by Lemma \ref{lem:phis} and by $\ell_i$ the supremum of its density. We will denote by ${\tilde E}_i$ the support of $\eta_i$, which by Lemma \ref{lem:phis} is a finite union of disjoint cubes. It suffices to give the lower bound for $\dimH \limsup {\tilde E}_i$ and $\dimP \limsup {\tilde E}_i$.

We will now inductively construct a family of sets $F_0\supset F_1\supset\ldots$ such that each $F_j; j\geq 1$ is a finite union of some ${\tilde E}_i$'s. Clearly,
\[
F := \bigcap F_j \subset \limsup {\tilde E}_i \subset \limsup E_i.
\]
In the next section we will proceed to distribute a measure $\mu$ on $F$.

Start with the cube $F_0=[0,1]^d$. Applying Lemma \ref{lem:kgb} to the cube $F_0$ we can find a finite family of disjoint balls ${\cal F}_1 \subset \{B_i\}$ such that
\[
\sum_{B_i\in {\cal F}_1} \lambda(B_i) > \kappa_2,
\]
where $\kappa_2$ is from Lemma \ref{lem:kgb}. Let $F_1 = \bigcup_{B_i\in {\cal F}_1} {\tilde E}_i$.

Fix some sequence $\epsilon_j\searrow 0$ and recall that each ${\tilde E}_i$ for $B_i\in \cal F_1$ is a union of cubes. Denote by $r_1$ the diameter of the smallest of these cubes. Further, let
\[
\tilde{r}_1 = \min(r_1, (\kappa_2\cdot \min\{\frac {1} {\ell_i \lambda(B_i)}; B_i\in {\cal F}_1\})^{1/\epsilon_1}).
\]
Now divide all the components of all ${\tilde E}_i$ with $B_i\in {\cal F}_1$ into cubes $D_1^{(1)},\dots, D_{N_1}^{(1)}$ of diameter between $\tilde{r}_1/2$ and $\tilde{r}_1$ (notice that different components might need to be divided into cubes of different size). These cubes will be where the construction continues.

We carry on inductively. Assume that the notions $\cal F_{j-1}$, $\tilde r_{j-1}$, 
\[
F_{j-1}=\bigcup_{B_i\in \cal F_{j-1}}{\tilde E}_i
\]
and the cubes $D_1^{(j-1)}, \dots, D_{N_{j-1}}^{(j-1)}\subset F_{j-1}$ of diameter between $\tilde{r}_{j-1}/2$ and $\tilde{r}_{j-1}$ as above have been defined.

Now apply Lemma \ref{lem:kgb} to each $D_k^{(j-1)}$, $k=1, \dots, N_{j-1}$. Obtain in this way a family ${\cal F}_j$ of balls $B_i$ such that for $D^{(j-1)}_k$, $k=1, \dots, N_{j-1}$, 
\[
\sum_{B_i\in {\cal F}_j; B_i\subset D_k^{(j-1)}} \lambda(B_i) \geq \kappa_2 \lambda(D_k^{(j-1)}).
\]
Let $F_j = \bigcup_{B_i\in {\cal F}_j} {\tilde E}_i$.

Finally, define $r_j$ as the smallest diameter of cube components of $F_j$. Set
\[
\tilde{r}_j = \min(r_j, \tilde r_{j-1}\cdot (\kappa_2\cdot \min\{\frac {1} {\ell_i \lambda(B_i)}; B_i\in {\cal F}_j\})^{1/\epsilon_j}),
\]
and subdivide $F_j$ into cubes of diameter between $\tilde r_j/2$ and $\tilde r_j$ as above to continue.

\section{Construction of the mass distribution}

We will now construct a mass distribution on $F$, and proceed in the next section by investigating its local dimension. Begin by setting the notations
\[
\mathcal F_j(E)=\{B_i \in \mathcal F_{j}\mid B_i \subset E\}\textrm{ and }F_j(E)=\bigcup_{B_i \in \mathcal F_j(E)}{\tilde E}_i
\]
for $E\subset F_0$.

We start with $\mu_0$ defined as the Lebesgue measure $\lambda$ restricted to $F_0$. As an intermediate step in the definition of $\mu_1$, in the first level of construction $F_1$, define
\[
\nu_1|_{B_i}=\frac{\mu_0|_{B_i}}{\sum_{B_k \in \mathcal F_1}\mu_{0} (B_k)}
\]
for all $i$ such that $B_i\in \mathcal F_1$, and no mass elsewhere. Then define, for $B_i\in \mathcal F_1$ and ${\tilde E}_i\subset B_i$, the measure $\mu_1$ supported on $F_1$ by setting
\[
\mu_1|_{{\tilde E}_i}=\nu_1(B_i) \cdot \eta_i. 
\]
Continue in this way; assume that $\mu_{n-1}$ has been defined on the sets ${\tilde E}_i$ with $B_i\in \mathcal F_{n-1}$. Let $B_k\in \mathcal F_n$, $B_k\subset D_j^{(n-1)}$, where $D_j^{(n-1)}$ is a cube of side length approximately $\tilde r_{n-1}$ from the cube decomposition of ${\tilde E}_i$. Then define
\[
\nu_n|_{B_k}=\frac{\mu_{n-1}(D_j^{(n-1)}) \lambda|_{B_k}}{\sum_{B_\ell \in \mathcal F_n(E_i); B_\ell\subset D_j^{(n-1)}}\lambda(B_\ell)},
\]
and for each ${\tilde E}_k\subset B_k\in \mathcal F_n$
\[
\mu_n|_{{\tilde E}_k}=\nu_n(B_k)\cdot \eta_k,
\]
obtaining a measure supported on $F_n$. 

Notice that $(\mu_n)$ is a sequence of probability measures supported on the compact set $[0,1]^d$, so that it has a weakly convergent subsequence. Denote the limit of this subsequence by $\mu$, and notice that it is by construction supported on $F$. In fact, $\mu_n(B_i)=\mu_{n+k}(B_i)$ for all $k\ge 0$, for all $B_i\in \mathcal F_n$, and similarly for ${\tilde E}_i\subset B_i\in \mathcal F_n$.

\section{Estimating the local dimension}\label{sec:localdim}

We now bound the local dimension of $\mu$. Pick a point $x\in F$ and $r>0$. We want to give an estimate to the $\mu$-measure of the ball $B_r(x)$. Let $n$ be such that $\tilde r_{n}<r\le \tilde r_{n-1}$. Since $x\in F$, we can write $x\in B_{i_n}\subset B_{i_{n-1}} \subset \ldots\subset B_{i_1}$, with $B_{i_k}\in {\cal F}_k$ for all $k$.



There are two cases to consider: $\diam B_{i_n} \leq r < \tilde{r}_{n-1}$ and $\tilde{r}_{n} \leq r < \diam B_{i_n}$.

\subsection*{Case 1: $\diam B_{i_n} \leq r < \tilde{r}_{n-1}$}

Recall that in the construction we divide the set ${\tilde E}_{i_{n-1}}$ into the $(n-1)$-st generation cubes $D_j^{(n-1)}, j=1, \dots, N_{n-1}$ of diameter approximately $\tilde{r}_{n-1}$, and for each of them 
\[
\mu(D_j^{(n-1)})=\mu_{n-1}(D_j^{(n-1)}). 
\]
Further, 
\[
\mu_{n-1}(D_j^{(n-1)})=\nu_{n-1}(D_j^{(n-1)})\eta_{i_{n-1}}(D_j^{(n-1)})\le \nu_{n-1}(D_j^{(n-1)})\ell_{i_{n-1}}\lambda(D_j^{(n-1)}). 
\]
To continue, set the notation
\[
D_j^{(n-1)}\subset\tilde  E_{i_{n-1}}\subset B_{i_{n-1}}\subset D_k^{(n-2)}\subset \tilde E_{i_{n-2}}\subset B_{i_{n-2}}. 
\]
Then by the definition of $\nu_{n-1}$,
\[
\nu_{n-1}(D_j^{(n-1)})=\frac{\mu_{n-1}(D_k^{(n-2)})}{\sum_{B_\ell\in \cal F_{n-1}; B_\ell \subset D_k^{(n-1)}}\lambda(B_\ell)}\cdot\lambda(B_{i_{n-1}}),
\]
where $\cal F_{n-1}$ was chosen using Lemma \ref{lem:kgb} so that 
\[
\sum_{B_\ell\in \cal F_{n-1}; B_\ell \subset D_k^{(n-2)}}\lambda(B_\ell)\ge \kappa_2\lambda(D_k^{(n-2)}). 
\]
Combining the above, we obtain 
\[
\mu_{n-1}(D_j^{(n-1)})\le \frac{\mu_{n-1}(D_k^{(n-2)})}{\lambda(D_k^{(n-2)})}\cdot\frac{\ell_{i_{n-1}}\lambda(B_{i_{n-1}})}{\kappa_2}\cdot\lambda(D_j^{(n-1)}). 
\]
Using this inductively, we end up with 
\begin{equation}\label{eq:muonD}
\mu(D_j^{(n-1)}) = C_{n-1}(D_j^{(n-1)}) \cdot \lambda(D_j^{(n-1)}),
\end{equation}
where 
\begin{equation}\label{eq:Cn}
C_{n-1}(D_j^{(n-1)}) \leq \frac {\lambda(B_{i_1})\cdots \lambda(B_{i_{n-1}}) \cdot \ell_{i_1}\cdots \ell_{i_{n-1}}} {\kappa_2^{n-1}}.
\end{equation}
Let now $D^{(n-1)}$ be the $(n-1)$-st generation cube containing $x$.
We will write $C_{n-1}(x)$ for $C_{n-1}(D^{(n-1)})$.

Recall that $\tilde r_{n-1}$ was chosen in such a way that
\begin{align*}
|\log\tilde r_{n-1}|&\le1/\epsilon_{n-1}\log (\frac{\ell_{i_{n-1}}\lambda(B_{i_{n-1}})}{\kappa_2})+\log \tilde r_{n-2}\\
&=1/\epsilon_1\log (\frac{\ell_{i_1}\lambda(B_{i_1})}{\kappa_2})+\dots +1/\epsilon_{n-1}\log (\frac{\ell_{i_{n-1}}\lambda(B_{i_{n-1}})}{\kappa_2}).
\end{align*}
In particular, by \eqref{eq:Cn} and the choice of $1/\epsilon_n\to \infty$, we have
\begin{equation} \label{eqn:cn}
\lim_{n\to\infty} \max_y \frac {|\log C_n(y)|} {|\log \tilde r_n|} =0.
\end{equation}

Using, essentially, \eqref{eq:muonD}, in the cube $D^{(n-1)}$ we find a collection of balls $B_i\in {\cal F}_n$ such that each of them satisfies
\begin{equation}\label{eq:littleballs}
\mu(B_i)=\nu_n(B_i) \leq \frac 1 \kappa_2 \cdot C_{n-1}(x) \lambda(B_i)
\end{equation}
and $3B_i$ are disjoint. Observe that, since $r<\tilde r_{n-1}$, the ball $B_r(x)$ can intersect at most $5^d$ of the $(n-1)$-st generation cubes. Furthermore, if indeed there is some $D_j^{(n-1)}$ such that $y\in D_j^{(n-1)}\cap F\cap B_r(x)$ then $B_r(x)\cap D_j^{(n-1)} \subset B_{2r}(y)\cap D_j^{(n-1)}$. Hence, $\mu(B_r(x))\le 10^d\mu(B_r(x)\cap D^{(n-1)})$, and we continue with the latter. We have
\[
\mu(B_r(x)\cap D^{(n-1)}) \leq \sum_{B_i\in {\cal F}_n; B_i\cap B_r(x)\neq\emptyset} \mu(B_i).
\]
However, by the construction, balls $3B_{i_n}$ and $3B_i$ are disjoint for any $i\neq i_n$, and in particular, since $x\in B_{i_n}$, we have $x\notin 3B_i$. Hence, if $B_r(x)$ intersects $B_i$ then $\diam B_i \leq r$, and we have from \eqref{eq:littleballs} and the disjointness of $B_i$
\[
\mu(B_r(x)\cap D^{(n-1)}) \leq \lambda(B_{2r}(x)) \cdot \frac {C_{n-1}(x)}{\kappa_2}.
\]

Summing up the estimates from above, we get
\[
\mu(B_r(x)) \leq 20^d \cdot \frac 1 \kappa_2 \max_y C_{n-1}(y)r^d
\]
and hence by \eqref{eqn:cn}, for $\diam B_{i_n} \leq r < \tilde{r}_{n-1}$ we have
\begin{equation}\label{eqn:case1}
\frac {\log \mu(B_r(x))} {\log r} \geq d - q_n
\end{equation}
with $q_n\to 0$.

\subsection*{Case 2: $\tilde{r}_{n} \leq r < \diam B_{i_n}$}

In this case $B_r(x)$ is not going to intersect any $B_i\in {\cal F}_n, i\neq i_n$. Hence, $\mu(B_r(x)) = \mu(B_r(x) \cap {\tilde E}_{i_n})$.

Consider the distribution of measure $\mu$ on ${\tilde E}_{i_n}$. We have
\begin{equation} \label{eqn:cn2}
\mu_n|_{{\tilde E}_{i_n}} \leq C_{n-1}(x) \kappa_2^{-1} \lambda(B_{i_n}) \cdot \eta_{i_n}.
\end{equation}

Hence, for each of the $n$-th level cubes $D_j^{(n)}$ we have

\[
\mu(D_j^{(n)})=\mu_n(D_j^{(n)}) \leq C_{n-1}(x) \kappa_2^{-1} \lambda(B_{i_n}) \cdot \eta_{i_n}(D_j^{(n)}).
\]
We note that these are $n$-th generation cubes, of size approximately $\tilde r_n$, not the $(n-1)$-st generation cubes we considered in the previous case. However, we do not yet know how exactly $\mu$ is distributed on each $D_j^{(n)}$ -- this will be decided on the following stages of the construction. Nevertheless, we can write

\[
\mu(B_r(x)\cap {\tilde E}_{i_n}) \leq \sum_{D_j^{(n)}; D_j^{(n)}\cap B_r(x) \neq \emptyset} \mu(D_j^{(n)})
\]
and we also know that if $D_j^{(n)}\cap B_r(x) \neq \emptyset$ then $D_j^{(n)}\subset B_{r+\tilde r_n}(x)$. Combining this with \eqref{eqn:cn2} we get

\[
\mu(B_r(x)) \leq C_{n-1}(x) \kappa_2^{-1} \lambda(B_{i_n}) \eta_{i_n}(B_{r+\tilde r_n}(x)\cap {\tilde E}_{i_n}).
\]

Note that $r+\tilde r_n \leq 2r$. By the definition of $\eta_{i_n}$ and the assumption $\lambda(B_i)\ge \phi^s(E_i)$ we have

\[
\eta_{i_n}(B_{2r}(x)) \leq \frac {(2r)^s \kappa_1} {\lambda(B_{i_n})},
\] 
and, using \eqref{eqn:cn}
\begin{equation} \label{eqn:case2}
\frac {\log \mu(B_r(x))} {\log r} \geq \frac {s\log 2 +\log C_{n-1}(x) + \log \kappa_1 -\log \kappa_2} {\log r} + s \geq s+q_n
\end{equation}
with $q_n \to 0$.


We finish the proof of Theorem \ref{thm:aux} applying the mass distribution principle \cite[Proposition 2.3]{F} to \eqref{eqn:case1} and \eqref{eqn:case2}.

\bibliography{ref}

\end{document}